\theoremstyle{plain}
\newtheorem{theorem}{Theorem}[section]
\newtheorem{lemma}[theorem]{Lemma}
\newtheorem{definition}[theorem]{Definition}
\newtheorem{cor}[theorem]{Corollary}
\newtheorem{remark}[theorem]{Remark}
\newtheorem{example}[theorem]{Example}
\numberwithin{equation}{section}
\begin{document}
\baselineskip=15.5pt

\title{Notes on fundamental groupoid schemes}
\author{Pavan~Adroja}
\address{Department of Mathematics, IIT Gandhinagar,
 Near Village Palaj, Gandhinagar - 382355, India}
 \email{pavan.a@iitgn.ac.in, adrojapavan@gmail.com}
\author{Sanjay~Amrutiya}
\address{Department of Mathematics, IIT Gandhinagar,
 Near Village Palaj, Gandhinagar - 382355, India}
 \email{samrutiya@iitgn.ac.in}
\thanks{The research work of PA is financially supported by CSIR-UGC fellowship under 
the grant no. 191620092279. SA is supported by the SERB-DST under project no. CRG/2023/000477.}
\subjclass[2020]{Primary: 14L15, 18M25; 14J60}
\keywords{Fundamental groupoid schemes, Semi-finite bundles, Tannakian categories}
\date{}

\begin{abstract}
In this article, we study the various fundamental groupoid schemes corresponding to Tannakian categories of certain types of vector bundles. We compute fundamental groupoid scheme of anisotropic conic, Klein bottle and abelian varieties. Additionally, we study the relation among various fundamental groupoid schemes by considering their representations.
\end{abstract}

\maketitle
\section{Introduction}
Let $X$ be a proper, connected, and reduced scheme over a perfect field $k$. When $X$ admits a $k$-rational point $x$, Madhav Nori \cite{No} defined a fundamental group scheme $ \pi^{\rm N}(X, x)$ associated with the neutral Tannakian category of essentially finite bundles, now known as the Nori fundamental group scheme.  

In subsequent developments, Langer \cite{La11, La12} introduced the $S$-fundamental group scheme, which is Tannaka dual to the category of numerically flat bundles. Otabe \cite{Ot} extended Nori's framework in characteristic zero by introducing the category of semi-finite bundles, which strictly contains the category of finite bundles. The affine group scheme $\pi^{\rm EN}(X, x)$ corresponding to the neutral Tannakian category of semi-finite bundles, equipped with the fiber functor $x^*$, is called the extended Nori fundamental group scheme.

The existence of these fundamental group schemes relies on a $k$-rational point of a given $k$-scheme $X$. In particular, a $k$-rational point $x$ leads to the neutral fiber functor $x^*$ defined by the pullback. If $X$ does not have a rational point, then we cannot get a natural fibre functor, but one can get a functor that takes value in the category of quasi-coherent sheaves over some $k$-scheme $S$. By Tannaka duality, this leads to an affine $k$-groupoid scheme that acts transitively on $S$.

In this article, we recall the notion of groupoid schemes, their properties, and their representations. We then provide an overview of Tannakian formalism by following \cite{PD}. In Section \ref{Section-Funda grpoid}, we study groupoid versions of certain Tannakian categories. In particular, we compute fundamental groupoid schemes of anisotropic conic, Klein bottle and abelian varieties. Furthermore, we describe the relation among various fundamental groupoid schemes by considering their representations.


\section{Preliminaries}
A groupoid is a category in which every morphism is an isomorphism. A classical example is the topological fundamental groupoid, where the objects are the points of a topological space, and the morphisms are homotopy classes of paths between two points. One can extend this notion to groupoid schemes by considering the functor of points, which takes values in a groupoid. To formalize this, we first introduce the precise definition of a groupoid scheme. Let $k$ be a field and $S$ be any $k$-scheme.

\begin{definition}\label{def-grpo}
\rm An \emph{affine $k$-groupoid scheme} $G$ acting transitively on a $k$-scheme $S$ is a $k$-scheme $G$ with
\begin{itemize}
	\item a faithfully flat affine morphism $(t,s):G \longrightarrow S\times_k S$ of $S\times_k S$-schemes,
	\item product morphism $m: G\times_{^sS^{\,t}} G \longrightarrow G$ of $S\times_k S$-schemes,
	\item unit element morphism $e:S\longrightarrow G$ over $S\times_k S$, where $S$ is over $S\times_k S$ by a diagonal morphism $\Delta: S\longrightarrow S\times_k S$, and
	\item inverse element morphism $i:G \longrightarrow G$ of $S\times_k S$-scheme such that $s\circ i = t$ and $t\circ i=s$ 
\end{itemize}
which satisfies the following axioms:
\begin{enumerate}
	\item Associativity: $m\circ (m\times id) = m\circ (id \times m)$.
	\item Identity: $m\circ (e\times id) = m\circ (id \times e)$.
	\item Inverse: $m\circ (i\times id) = e\circ s$.
\end{enumerate}
\end{definition}

Let us interpret the above definition through the functor of points perspective. Let $(y,x): T\rightarrow S\times S$ be any morphisms. Consider the category $\mathcal{G}(T)$ whose objects are $S(T)$ and morphisms are $G_{y,x}(T)$ for any two object $x,y\in S(T)$; where $G_{y,x}$ defined by the following diagram:
\[
\xymatrix{    
          G_{y,x} \ar[r] \ar[d] & G \ar[d]^{(t,s)} \\
          T \ar[r]^{(y,x)\,\,\,\,\,\,\,\,}   & S\times_k S \\
         } 
\]
The category $\mathcal{G}(T)$ is a groupoid. We will use the above notations throughout this section.

\begin{example}
\rm Consider $G=S\times_k S$ with the map $(pr_1, pr_2): S\times_k S \rightarrow S\times_k S$. Then, this forms a groupoid scheme acting on a $k$-scheme $S$; where $pr_1$ and $pr_2$ denote the first and second projection maps, respectively.
\end{example}

\begin{example}
\rm If $G$ is an affine group scheme over a field $k$ with the structure map $f: G\rightarrow \mathrm{Spec\,}k$. Then, $(t,s)=(f,f)$ gives a natural structure of $k$-groupoid scheme acting transitively on $\mathrm{Spec\,}k$. In this sense, a groupoid scheme is a generalization of a group scheme.
\end{example}

Observe that, if $t=s$ in Definition \ref{def-grpo}, then the morphism $(t,s):G\rightarrow S\times_k S$ factor through the diagonal morphism $\Delta:S\rightarrow S\times_k S$. In this case, $G$ becomes group scheme over $S$. Given a groupoid scheme $G$ over $S$, we can define the corresponding diagonal group scheme $G^{\Delta}$ over $S$ and, for any morphism of $k$-scheme $f:T\rightarrow S$, we have a groupoid scheme $G_T$ over $T$ by the following square diagrams. 
\[
\xymatrix{    
          G^{\Delta} \ar[r] \ar[d] & G \ar[d]^{(t,s)} & & G_{T} \ar[r] \ar[d] & G \ar[d]^{(t,s)} \\
          S \ar[r]^{\Delta \,\,\,\,\,\,\,}   & S\times_k S & & T\times_k T \ar[r]^{(f,f)}  & S\times_k S \\
         } 
\]

\subsection{Representations of groupoid scheme}
A representation of $G$ is a pair of quasi-coherent sheaf $V$ of $\mathcal{O}_S$-module together with an action $\rho$ of $G$ on $V$, which is compatible with multiplication and base change. That means, for any two objects $(y,x):T\rightarrow S\times_k S$, we have a map $\rho_{y,x}:G_{y,x}(T)\rightarrow \mathrm{Iso}_T(x^*V\rightarrow y^*V)$ which is compatible with multiplication and base change, and if $x=y$ then $\rho_{x,x}(\mathrm{Id}_x)=\mathrm{Id}_{x^*V}$. The category of quasi-coherent representations of $G$ on $S$ is denoted by $\mathbf{Rep}(S:G).$

Let $f: T'\rightarrow T$ be a morphism of $k$-schemes. We get two object $(y\circ f,x\circ f):T'\rightarrow S\times_k S$ by taking composition with $(y,x):T\rightarrow S\times_k S$. There is a natural map $f^\#: G_{y,x}(T) \rightarrow G_{y\circ f,x\circ f}(T')$ given by $g:T\rightarrow G_{y,x}$ maps to $(pr_1\circ g\circ f, \mathrm{id}_{T'}):T'\rightarrow G_{y\circ f,x\circ f}$; where $pr_1:G_{y,x}\rightarrow G$ is the first projection map. The following diagram gives compatibility with the base change $f:T'\rightarrow T$.
\[
\xymatrix{    
          G_{y,x}(T) \ar[rrr]^{\rho_{y,x}} \ar[d]_{f^\#} & & & \mathrm{Iso}_T(x^*V\rightarrow y^*V) \ar[d]^{f^*} \\
          G_{y\circ f,x\circ f}(T') \ar[rrr]_{\rho_{y\circ f,x\circ f}} & & & \mathrm{Iso}_{T'}(f^*x^*V\rightarrow f^*y^*V)\\
         } 
\]
From the above, we conclude that any representation comes from the representation $\rho_{t,s}: G_{t,s}(G)\rightarrow \mathrm{Iso}_G(s^*V\rightarrow t^*V)$ by the base change. Any given $g: T\rightarrow G_{y,x}$, take $f=pr_1\circ g: T \rightarrow G$ in the above diagram, then one can determined ${\rho_{y,x}}(g)$ by the base change map $f$. Here, the map $f^\#: G_{t,s}(G)\rightarrow G_{t\circ f,s\circ f}(T)=G_{y,x}(T)$ and $f^\#(\mathrm{id}_G, \mathrm{id}_G)=g$. That means, $\rho_{t,s}(G)(\mathrm{id}_G, \mathrm{id}_G):s^*V \simeq t^*V$ determines the representation of $G$ on $V$.

\begin{remark} \label{R1}
\rm The category $\mathbf{Rep}(S:G)$ satisfies the base-change property \cite[Remark 1.8]{PD}. i.e. for any $T\rightarrow S$, we have that $\mathbf{Rep}(S:G)$ is equivalent to $\mathbf{Rep}(T:G_T)$; where $G_T$ denote the base change of $G$. In particular, if $G=S\times_k S$ with $(t,s)=(pr_1,pr_2)$ are the first and second projection maps, then for any affine open set $U$ of $S$, we have $\mathbf{Rep}(S:S\times_k S)\simeq \mathbf{Rep}(U:U\times_k U)$ by the base change $U \hookrightarrow S$.
\end{remark}

\section{Tannakian category}
In this section, we recall some basic notions of the Tannakian category from \cite{PD}. Let $k$ be a field and $\mathcal{T}$ be a $k$-linear abelian rigid tensor category with $\mathrm{End}(\mathbbm{1})=k$.

\begin{definition}
\rm A \emph{fiber functor} of $\mathcal{T}$ on a $k$-scheme $S$ is an exact $k$-linear tensor functor $\omega: \mathcal{T}\longrightarrow \mathbf{QCoh}(S)$ provided with a functorial isomorphisms $\omega(X)\otimes \omega(Y)\longrightarrow \omega(X\otimes Y)$ for all $X,Y \in \mathrm{Ob}(\mathcal{T})$, which is compactible with associativity, commutativity and unity.
\end{definition}

Note that a fiber functor commutes with dual. The axioms on $\mathcal{T}$ force that $\omega$ takes value in locally free sheaves of finite rank (see \cite[1.9]{PD}).

\begin{definition}
\rm A category $\mathcal{T}$ is called \emph{Tannakian} over $k$ if $\mathcal{T}$ admits a fiber functor $\omega: \mathcal{T}\longrightarrow \mathbf{QCoh}(S)$ for a non-empty $k$-scheme $S$. If $S=\mathrm{Spec\,}k$, then we call $\mathcal{T}$ to be a \emph{neutral Tannakian category}.
\end{definition}

\begin{example}
\rm Let $G$ be an affine group scheme over a field $k$. Then, the category of linear representations of $G$ denote by $\mathbf{Rep}_k(G)$ with the forgetful functor on the category $\mathbf{Vec}_k$ of finite dimensional $k$-vector spaces forms a neutral Tannakian category over $k$. More generally, for any affine $k$-groupoid scheme $G$ acting transitively on $S$, the category $\mathbf{Rep}(S:G)$ with the forgetful functor on $\mathbf{QCoh}(S)$ is a Tannakian category.
\end{example}

Let $\omega:\mathcal{T}\rightarrow \mathbf{QCoh}(S)$ be a fiber functor. Let $pr_1,pr_2: S\times_k S\rightarrow S$ denote the first and second projection morphisms, respectively. Then, they induce a functor $pr_1^*, pr_2^*: \mathbf{QCoh}(S)\rightarrow \mathbf{QCoh}(S\times_k S)$ defined by the pullback. Define 
$$\mathbf{Aut}_k^{\otimes}(\omega):= \mathbf{Isom}_{S\times_k S}^{\otimes}(pr_2^*\omega, pr_1^*\omega).$$ This is a representable functor over $S\times_k S$, which yields a groupoid scheme acting on $S$ (see the following theorem by Deligne).

\begin{theorem}\cite[Theorem 1.12]{PD}\label{Tannaka}
Let $\mathcal{T}$ be a $k$-linear abelian rigid tensor category on a field $k$, and let $\omega:\mathcal{T}\rightarrow
 \mathbf{QCoh}(S)$ be a fiber functor of $\mathcal{T}$ on a $k$-scheme $S$. Then,
\begin{enumerate}
	\item the groupoid $\mathbf{Aut}_k^{\otimes}(\omega)$ is affine and faithfully flat on $S\times_k S$.
	\item $\omega$ induces an equivalence of $\mathcal{T}$ with the category $\mathbf{Rep}(S:\mathbf{Aut}_k^{\otimes}(\omega))$.
	\item let $G$ be an affine $k$-groupoid scheme acting transitively on $S$ and faithfully flat on $S\times_k S$. Let $\tilde{\omega}: \mathbf{Rep}(S:G)\rightarrow \mathbf{QCoh}(S)$ defined by forgetting an action of $G$. Then, we have $G \simeq \mathbf{Aut}_k^{\otimes}(\tilde{\omega})$.
\end{enumerate}
\end{theorem}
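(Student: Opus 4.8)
The plan is to follow Deligne's reconstruction strategy, treating the three assertions in the order (1), (2), (3): the faithful flatness in (1) is exactly what turns the comparison functor of (2) into an equivalence, and (2) in turn drives the reconstruction in (3). Throughout I would use that $\omega$ lands in locally free sheaves of finite rank (as recalled above) and that $\mathcal{T}$ is the filtered union of the rigid tensor subcategories $\langle X\rangle$ generated by its objects.

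\emph{Representability and affineness.} For a single object $X$ the functor $\underline{\mathbf{Isom}}(pr_2^*\omega(X),pr_1^*\omega(X))$ on $(S\times_k S)$-schemes is represented by an affine scheme of finite type over $S\times_k S$: it is the closed subscheme of $\underline{\mathrm{Hom}}(pr_2^*\omega(X),pr_1^*\omega(X))\times \underline{\mathrm{Hom}}(pr_1^*\omega(X),pr_2^*\omega(X))$ cut out by the two relations expressing mutual inverseness, and these $\underline{\mathrm{Hom}}$-schemes are vector-bundle (hence affine) schemes precisely because $\omega(X)$ is locally free of finite rank. Taking the limit over all objects and imposing naturality in $X$ together with the tensor-compatibility relations realizes $\mathbf{Aut}_k^\otimes(\omega)$ as a closed subscheme of a product of such affine schemes; an inverse limit of affine schemes is affine, so $\mathbf{Aut}_k^\otimes(\omega)$ is affine over $S\times_k S$. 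The unit, product, and inverse morphisms, and the groupoid axioms of Definition \ref{def-grpo}, are then built tautologically from composition and inversion of tensor-isomorphisms.

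\emph{Faithful flatness.} This is the crux, and the step I expect to be the main obstacle. Faithful flatness of $(t,s)$ is fpqc-local on $S\times_k S$, so it suffices to show that $pr_1^*\omega$ and $pr_2^*\omega$ become isomorphic as tensor functors after a faithfully flat base change, together with flatness of the resulting coordinate algebra. Flatness I would obtain by writing the coordinate algebra as a filtered colimit assembled from the exact functor $\omega$ and propagating flatness along the generators $\langle X\rangle$ using exactness. The nonemptiness of the Isom-scheme fpqc-locally — equivalently, that any two fiber functors are locally isomorphic — is Deligne's key existence result; I would prove it by the standard reduction: after a field extension and a faithfully flat base change one neutralizes $\omega$, so the problem becomes the classical statement that $\mathbf{Isom}^\otimes$ of two neutral fiber functors is a torsor under the faithfully flat automorphism group scheme, nonempty after a further faithfully flat cover. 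Descending flatness and surjectivity back down gives (1).

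\emph{The equivalence (2).} Each $\omega(X)$ carries the tautological action of $G:=\mathbf{Aut}_k^\otimes(\omega)$, so $\omega$ lifts to an exact tensor functor $\bar\omega:\mathcal{T}\to \mathbf{Rep}(S:G)$ compatible with the forgetful functor. For full faithfulness I would use rigidity to rewrite $\mathrm{Hom}_{\mathcal{T}}(X,Y)\cong \mathrm{Hom}_{\mathcal{T}}(\mathbbm{1},X^\vee\otimes Y)$ and reduce to the identity $\mathrm{Hom}_{\mathcal{T}}(\mathbbm{1},Z)\cong \mathrm{Hom}_G(\mathcal{O}_S,\omega(Z))$, i.e. that $\mathcal{T}$-maps out of the unit compute exactly the $G$-invariant global sections of the fibre, which follows from exactness and faithfulness of $\omega$. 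Essential surjectivity holds because $\mathbf{Rep}(S:G)$ is generated under subquotients by the image of $\bar\omega$, so every representation arises as a kernel or cokernel of maps between objects in the image; here the faithful flatness of (1) is what guarantees that no representations are lost. Finally, reconstruction (3) follows by running (2) for a given transitive, faithfully flat $G$: the forgetful functor $\tilde\omega$ is a fiber functor with locally free finite-rank values, the action of $G$ furnishes a canonical morphism of $(S\times_k S)$-groupoid schemes $G\to \mathbf{Aut}_k^\otimes(\tilde\omega)$, and since both sides are affine and faithfully flat over $S\times_k S$ with equivalent representation categories by (2), this morphism is an isomorphism, as one checks on coordinate algebras by faithfully flat descent.
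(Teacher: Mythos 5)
First, a point of comparison: the paper itself gives no proof of this statement --- it is quoted directly from Deligne \cite[Th\'eor\`eme 1.12]{PD} --- so there is no internal argument to measure yours against. The only meaningful benchmark is Deligne's own proof, and your outline does follow its overall architecture: affineness via closed subschemes of Hom-bundles and cofiltered limits, the equivalence in (2) via rigidity and the reduction of $\mathrm{Hom}_{\mathcal{T}}(X,Y)$ to $\mathrm{Hom}_{\mathcal{T}}(\mathbbm{1},X^{\vee}\otimes Y)$, and the reconstruction in (3) by showing the canonical morphism $G\to \mathbf{Aut}_k^{\otimes}(\tilde{\omega})$ is an isomorphism once (1) and (2) are in hand.

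The genuine gap is at the step you yourself identify as the crux. You propose to obtain faithful flatness of $(t,s):\mathbf{Aut}_k^{\otimes}(\omega)\to S\times_k S$ from the assertion that any two fiber functors are fpqc-locally isomorphic, and to get that assertion from ``the classical statement'' that $\mathbf{Isom}^{\otimes}$ of two neutral fiber functors is a torsor under a faithfully flat automorphism group scheme. But the logical order in the literature is the reverse: local isomorphism of fiber functors is Deligne's Corollaire 1.13, \emph{deduced from} Th\'eor\`eme 1.12, and the torsor statement for neutral fiber functors (Deligne--Milne, Theorem 3.2) is precisely the result whose complete proof required the machinery of \cite{PD} in the first place --- its content \emph{is} the faithful flatness you are trying to establish. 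So at the decisive point your argument reduces the theorem to itself. Relatedly, ``propagating flatness along the generators using exactness'' only plausibly yields flatness of the coordinate algebra over one copy of $\mathcal{O}_S$ (via $s$ or $t$ separately, which is comparatively formal); the substance of (1) is faithful flatness over $\mathcal{O}_{S\times_k S}$ jointly, which is where Deligne's coend/comodule analysis (and, for the existence results in \cite[\S 7]{PD}, the hypothesis $\dim(X)\in\mathbb{Z}_{\geq 0}$) actually does the work. The remaining parts --- affineness, the lift $\bar{\omega}$, full faithfulness, essential surjectivity by subquotient generation, and the reconstruction in (3) --- are correctly sketched, but they all lean on (1), so the proposal as written does not constitute a proof.
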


The above theorem provides an equivalence between an affine groupoid schemes acting transitively on $S$ with faithfully flat on $S\times_k S$ and Tannakian categories whose fiber functor takes value in $\mathbf{QCoh}(S)$. Now, let us recall a theorem from \cite{PD}, which tells about the existence of fiber functor when $k$ has characteristic zero.

\begin{definition}
\rm Let $X$ be an object in $\mathcal{T}$. The \emph{dimension} of $X$ is defined by $\mathrm{dim}(X):= ev \circ \delta$, where $\delta: \mathbbm{1}\longrightarrow X^{\vee}\otimes X$ and $ev:X^{\vee}\otimes X \longrightarrow \mathbbm{1}$.
\end{definition}


\begin{theorem}\cite[Theorem 7.1]{PD}
Let $\mathcal{T}$ be a $k$-linear abelian rigid tensor category on a field $k$ of characteristic zero. Then, the following conditions are equivalent:
\begin{enumerate}
	\item $\mathcal{T}$ is Tannakian.
	\item For all $X\in \mathrm{Ob}(\mathcal{T})$, we have $\mathrm{dim}(X)\in \mathbb{Z}_{\geq 0}$.
	\item For all $X\in \mathrm{Ob}(\mathcal{T})$, there exists an integer $n\geq 0$ such that $\wedge^n X=0$.
\end{enumerate}
\end{theorem}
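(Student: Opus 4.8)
The plan is to prove all equivalences by combining the two easy implications out of $(1)$, the elementary numerical implication $(3)\Rightarrow(2)$, and the one genuinely deep implication $(2)\Rightarrow(1)$. These close the cycle $(2)\Rightarrow(1)\Rightarrow(3)\Rightarrow(2)$; in particular $(3)$ will drop out for free from the fiber functor produced in $(2)\Rightarrow(1)$, so I will not need a direct categorical argument for $(2)\Rightarrow(3)$.

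First I would dispose of $(1)\Rightarrow(2)$ and $(1)\Rightarrow(3)$. Given a fiber functor $\omega:\mathcal{T}\to\mathbf{QCoh}(S)$, it is exact, $k$-linear, tensor, commutes with duals, and lands in locally free sheaves of finite rank. Hence $\omega$ carries the coevaluation $\delta:\mathbbm{1}\to X^{\vee}\otimes X$ and the evaluation $ev:X^{\vee}\otimes X\to\mathbbm{1}$ of $X$ to those of $\omega(X)$, so the scalar $\dim(X)\in\mathrm{End}(\mathbbm{1})=k$ is sent to $\mathrm{rk}\,\omega(X)$. This rank is a nonnegative integer (constant on $S$ by transitivity of the associated groupoid action), giving $(2)$; and since $\wedge^{r+1}\omega(X)=0$ for $r=\mathrm{rk}\,\omega(X)$ while $\omega$ commutes with exterior powers and is faithful, we get $\wedge^{r+1}X=0$, giving $(3)$.

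Next, $(3)\Rightarrow(2)$ rests on the identity
\[
\dim(\wedge^{m}X)=\binom{\dim X}{m}\qquad\text{in }k,
\]
valid in any rigid tensor category in characteristic zero. I would prove it from the trace formula $\mathrm{tr}(\sigma\mid X^{\otimes m})=(\dim X)^{c(\sigma)}$ for $\sigma\in S_{m}$, where $c(\sigma)$ is the number of cycles, using that $\wedge^{m}X$ is the image of the antisymmetrizer $\tfrac{1}{m!}\sum_{\sigma}\mathrm{sgn}(\sigma)\,\sigma$ — here $\tfrac{1}{m!}\in k$ is exactly where characteristic zero enters — and then summing the resulting symmetric function via the exponential formula. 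Granting the identity, $(3)$ supplies an $n$ with $\wedge^{n}X=0$, hence $\binom{\dim X}{n}=0$, i.e. $\dim X(\dim X-1)\cdots(\dim X-n+1)=0$ in $k$; since a field has no zero divisors, $\dim X\in\{0,1,\dots,n-1\}\subset\mathbb{Z}_{\geq 0}$.

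The heart of the matter is $(2)\Rightarrow(1)$, the construction of a fiber functor, and this is the step I expect to be the main obstacle. I would first reduce to the case where $\mathcal{T}$ is $\otimes$-generated by finitely many objects, writing a general $\mathcal{T}$ as the filtered union of such rigid tensor subcategories and assembling the corresponding fiber functors and groupoid schemes compatibly (a projective-limit argument). For finitely generated $\mathcal{T}$, the content of $(2)$ is that every dimension is an honest nonnegative integer, which is precisely the numerical condition ruling out ``odd'' (super) objects. One must then produce a nonempty $k$-scheme $S$ and an exact tensor functor $\omega:\mathcal{T}\to\mathbf{QCoh}(S)$; I would do this by showing that the appropriate scheme of fiber functors is nonempty and that the resulting $\mathbf{Aut}_{k}^{\otimes}(\omega)$ is faithfully flat over $S\times_{k}S$, after which Theorem \ref{Tannaka} identifies $\mathcal{T}$ with $\mathbf{Rep}(S:\mathbf{Aut}_{k}^{\otimes}(\omega))$. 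This nonemptiness and faithful flatness is the crux: it is \emph{false} in general — super-Tannakian categories, where some $\dim X<0$ or no exterior power vanishes, are genuine counterexamples — and its resolution is where both hypotheses are indispensable. Characteristic zero enters through Cartier's theorem (affine group schemes of finite type in characteristic $0$ are reduced, hence smooth), forcing the relevant automorphism groupoid to be an ordinary rather than super affine groupoid scheme, while $(2)$ is what makes the obstruction to a global $\omega$ vanish. Once $\omega$ is produced, $\mathrm{rk}\,\omega(X)=\dim X$ makes $(3)$ automatic by faithfulness of $\omega$, completing the cycle.
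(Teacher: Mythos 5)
This statement is quoted from Deligne (\cite[Theorem 7.1]{PD}); the paper gives no proof of its own, only the one-line indication that the fiber functor in $(2)\Rightarrow(1)$ comes from a ring object $A$ in the ind-category of $\mathcal{T}$ with $V\otimes A\simeq A^{\dim V}$. Measured against that, your treatment of the peripheral implications is fine: $(1)\Rightarrow(2),(3)$ via the rank of $\omega(X)$ and vanishing of $\wedge^{r+1}$ of a rank-$r$ locally free sheaf is correct, and $(3)\Rightarrow(2)$ via $\dim(\wedge^m X)=\binom{\dim X}{m}$, proved from $\mathrm{tr}(\sigma\mid X^{\otimes m})=(\dim X)^{c(\sigma)}$ and the antisymmetrizer, is exactly the standard argument and is where characteristic zero genuinely enters (through $\tfrac{1}{m!}\in k$).

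The genuine gap is $(2)\Rightarrow(1)$. You correctly identify it as the crux, but what you offer --- ``show the scheme of fiber functors is nonempty and $\mathbf{Aut}_k^{\otimes}(\omega)$ is faithfully flat'' --- is a restatement of the goal, not an argument; nonemptiness of that scheme is precisely what has to be produced. Deligne's mechanism, which the paper itself points to, is to construct a nonzero commutative algebra $A$ in $\mathrm{Ind}(\mathcal{T})$ over which every object becomes free, $V\otimes A\simeq A^{\dim V}$: one splits one object at a time by forming a universal splitting algebra (a quotient of the symmetric algebra on $V^{\vee}\otimes k^{\dim V}$ cutting out the isomorphisms), and the hypothesis $\dim V\in\mathbb{Z}_{\geq 0}$ is used at each step to show this algebra is nonzero (otherwise a dimension count forces it to vanish); a filtered colimit over all objects then yields $A$, and $V\mapsto\Gamma(V\otimes A)=\mathrm{Hom}(\mathbbm{1},V\otimes A)$ is the fiber functor into $\Gamma(A)$-modules, i.e.\ into $\mathbf{QCoh}(\mathrm{Spec}\,\Gamma(A))$. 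None of this appears in your sketch. Your appeal to Cartier's theorem is also misplaced: reducedness of group schemes in characteristic $0$ is not the operative input here; characteristic zero is used through the semisimplicity of $k[S_m]$ (Schur functors, the binomial identity above) and in the nonvanishing of the splitting algebras. As written, the proposal proves the easy two-thirds of the theorem and leaves the essential implication unestablished.
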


As an application of the above theorem, we can say that any $k$-linear abelian rigid tensor full subcategory of the category of vector bundles forms Tannakian category. For a fiber functor, as mentioned in the proof of the above theorem, there exists a ring element $A$ in the induced category of $\mathcal{T}$ such that $V\otimes A\simeq A^{\mathrm{dim}(V)}$ for all $V \in \mathrm{Ob}(\mathcal{T})$. So, one can define a functor $V\mapsto \Gamma(V\otimes A)$ which takes value in the category of  $\Gamma(A)$-modules, which is equivalent to $\mathbf{QCoh}(\mathrm{Spec}\,\Gamma(A))$.

\begin{remark}
\rm If $k$ has positive characteristic, then the above result holds if the category $\mathcal{T}$ satisfies an extra condition of finiteness (see \cite{PE}).
\end{remark}

\subsection*{Tensor product of categories}
In \cite[\S 5, Page no. 142]{PD}, Deligne defined the tensor product of abelian categories. Let $\mathcal{T}_1$ and $\mathcal{T}_2$ be two $k$-linear abelian categories. The Deligne tensor product of $\mathcal{T}_1$ and $\mathcal{T}_2$ is a $k$-linear abelian category $\mathcal{T}$ with $k$-bilinear and exact-in-each-variable functor $\otimes_D:\mathcal{T}_1\times \mathcal{T}_2 \rightarrow \mathcal{T}$ which satisfies the universal property: for any $k$-linear abelian category $\mathcal{C}$ with a $k$-bilinear and exact-in-each-variable functor $T:\mathcal{T}_1\times \mathcal{T}_2 \rightarrow \mathcal{C}$, there exists a $k$-linear exact functor $T':\mathcal{T} \rightarrow \mathcal{C}$ such that the diagram 
\[
\xymatrix{
\mathcal{T}_1\times \mathcal{T}_2 \ar[d]_{T}  \ar[rr]^{\otimes_D}  & &  \mathcal{T} \ar[dll]^{T'} \\
\mathcal{C}  & &  \\
}
\]
commutes. 
We will denote $\mathcal{T}$ by $\mathcal{T}_1\otimes_D \mathcal{T}_2$. 

\begin{remark}\label{Rep(G_1*G_2)= Rep(G_1)*Rep(G_2)}
\rm By \cite[5.18, 6.21]{PD}, it follows that if \( G_1 \) and \( G_2 \) are affine groupoid schemes acting transitively on a \( k \)-scheme \( S \), then the category of representations \( \mathbf{Rep}(S:G_1 \times_k G_2) \) is equivalent to the Deligne tensor product of categories:  
\[
\mathbf{Rep}(S:G_1 \times_k G_2) \simeq \mathbf{Rep}(S:G_1) \otimes_D \mathbf{Rep}(S:G_2).
\]
\end{remark}


\section{Fundamental Groupoid Schemes}\label{Section-Funda grpoid}
In this section, we will define groupoid versions of fundamental group schemes by using the Tannakian duality. Let us recall some definitions and fix some notations. Throughout this section, let $X$ be a smooth scheme of finite type defined over a field $k$ of characteristic zero with $\mathrm{H}^0(X,\mathcal{O}_X)=k$.

\begin{definition}
\rm A vector bundle $V$ on $X$ is called \emph{finite} if there are two different polynomials $f\neq g \in \mathbb{N}[t]$ such that $f(V)\simeq g(V)$, where the multiplication is defined by tensor product and addition is given by direct sum.
\end{definition}

By \cite[Chapter I, Lemma 3.1]{No}, a line bundle is finite if and only if it is torsion. Recall that a line bundle $L$ on $X$ is called \emph{torsion} if there exists a positive integer $m$ such that  $L^{\otimes m} \simeq \mathcal{O}_X$.

\begin{definition}
\rm A vector bundle $V$ on $X$ is called \emph{unipotent} if there is a filtration 
$$V=V_0 \supset V_1 \supset V_2 \supset \cdots \supset V_{n-1} \supset V_n=0$$ such that each successive quotient $V_i/V_{i-1}\simeq \mathcal{O}_X$ for all $i$.
\end{definition}

\begin{definition}
\rm A vector bundle $V$ on $X$ is called \emph{semi--finite} if there is a filtration 
$$V=V_0 \supset V_1 \supset V_2 \supset \cdots \supset V_{n-1} \supset V_n=0$$ such that each successive quotient $V_i/V_{i-1}$ is indecomposable finite bundle for all $i$.
\end{definition}

\begin{definition}
\rm A vector bundle $V$ on $X$ is called \emph{numerically flat} if $V$ and $V^*$ are numerically effective bundles. That means, tautological line bundles $\mathcal{O}_{\mathbb{P}(V)}(1)$ and $\mathcal{O}_{\mathbb{P}(V^*)}(1)$ are numerically effective (nef).
\end{definition}

A vector bundle $V$ on $X$ is called \textit{Nori-semistable} if for any smooth projective curve $C$ and a non-constant morphism $f:C\rightarrow X,$ the pullback $f^*V$ is a semi-stable bundle of degree zero.

Consider the categories $\mathcal{C}^{\rm N}(X)$, $\mathcal{C}^{\rm uni}(X)$, $\mathcal{C}^{\rm EN}(X)$, and $\mathcal{C}^{\rm nf}(X)$, which are full subcategories of the category $\mathbf{QCoh}(X)$ of quasi-coherent sheaves on $X$ with objects consisting of finite, unipotent, semi-finite, and numerically flat bundles, respectively. These categories satisfy the following relations:
$$\mathcal{C}^{\rm N}(X), \mathcal{C}^{\rm uni}(X) \subset \mathcal{C}^{\rm EN}(X) \subset \mathcal{C}^{\rm nf}(X).$$

Each of these categories is a $k$-linear, abelian, rigid tensor category. If $X$ has a $k$-rational point $x$, then the category $\mathcal{C}^{\star}(X)$, with the fiber functor $x^*$, forms a neutral Tannakian category, where $\star = \mathrm{N}, \mathrm{uni}, \mathrm{EN}, \mathrm{nf}$. By Tannakian duality, these categories correspond to an affine group scheme $\pi^{\star}(X, x)$ for $\star = \mathrm{N}, \mathrm{uni}, \mathrm{EN}$, and $\pi^{\rm S}(X, x)$ for $\star = \mathrm{nf}$. The affine group schemes $\pi^{\rm N}(X,x)$, $\pi^{\rm uni}(X, x)$, $\pi^{\rm EN}(X, x)$, and $\pi^{\rm S}(X, x)$ are called the \textit{Nori fundamental group scheme}, \textit{unipotent fundamental group scheme}, \textit{extended Nori fundamental group scheme}, and \textit{$S$-fundamental group scheme}, respectively (see \cite{No, Ot, La11, La12} for more details).

Let $S$ be a $k$-scheme, and let $\omega: \mathcal{C}^{\star}(X) \rightarrow \mathbf{QCoh}(S)$ be a fiber functor. By Tannakian duality, this corresponds to a $k$-groupoid scheme $\Pi^{\star}(X, \omega)$ acting on $S$. The groupoid schemes $\Pi^{\rm N}(X, \omega)$, $\Pi^{\rm uni}(X, \omega)$, $\Pi^{\rm EN}(X, \omega)$, and $\Pi^{\rm S}(X, \omega)$ are called the \textit{Nori fundamental groupoid scheme}, \textit{unipotent fundamental groupoid scheme}, \textit{extended Nori fundamental groupoid scheme}, and \textit{$S$-fundamental groupoid scheme}, respectively.

\subsection{Fundamental groupoid scheme of an anisotropic conic}
Let $X$ be a Riemann surface and $\sigma : X \rightarrow X$ be an anti-holomorphic involution. The pair $(X, \sigma)$ determines the real curve. If $X=\mathbb{P}^1_{\mathbb{C}}$, then there are only two possibilities for $\sigma$ upto equivalents (see \cite[Chapter-II, Exercise-4.7(e)]{Ha}).
\begin{enumerate}
	\item Let $\sigma_1:\mathbb{P}^1_{\mathbb{C}}\rightarrow \mathbb{P}^1_{\mathbb{C}}$ given by $[z_1:z_2]\mapsto [\overline{z_1}:\overline{z_2}]$ be an anti-holomorphic involution. The pair $(\mathbb{P}^1_{\mathbb{C}}, \sigma_1)$ gives a real projective line $\mathbb{P}^1_{\mathbb{R}}$.
	\item Let $\sigma_2:\mathbb{P}^1_{\mathbb{C}}\rightarrow \mathbb{P}^1_{\mathbb{C}}$ given by $[z_1:z_2]\mapsto [-\overline{z_2}:\overline{z_1}]$ be an anti-holomorphic involution. The pair $(\mathbb{P}^1_{\mathbb{C}}, \sigma_2)$ gives an anisotropic conic given by $C:\{x_0^2+x_1^2+x_2^2=0\}$ contained in $\mathbb{P}^2_{\mathbb{R}}$. Note that $C$ does not have any $\mathbb{R}$-point.
\end{enumerate}

We want to find the various fundamental groupoid schemes for the above two real curves. By \cite[Theorem 5.3]{BN}, $\mathcal{C}^{\rm nf}(C)$ contains only trivial bundles on $C$ and so $\mathcal{C}^{\rm EN}(C)$, $\mathcal{C}^{\rm N}(C)$ and $\mathcal{C}^{\rm uni}(C)$. Consider a fiber functor 
$$\tau: \mathcal{C}^{\rm nf}(C) \longrightarrow \mathbf{QCoh}(C)$$
defined by an inclusion. Let $\overline{x}: \mathrm{Spec\,}\mathbb{C}\rightarrow C$ be a geometric point, then we have $\tau':=\overline{x}^*\circ \tau : \mathcal{C}^{\rm nf}(C) \rightarrow \mathbf{Vec}_{\mathbb{C}}$ a fiber functor taking value in $\mathbf{Vec}_{\mathbb{C}}$. By Tannaka duality, we get an $S$-fundamental $\mathbb{R}$-groupoid scheme $\Pi^{\rm S}(C, \tau')\simeq \mathrm{Spec\,}\mathbb{C} \times_\mathbb{R} \mathrm{Spec\,}\mathbb{C}$ which act transitively on $\mathrm{Spec\,}\mathbb{C}$ for the corresponding Tannakian category $(\mathcal{C}^{\rm nf}(C), \tau')$. By Remark \ref{R1}, we have 
$$\Pi^{\rm S}(C, \tau) \simeq C\times_{\mathbb{R}} C$$
and so $\Pi^{\rm EN}(C, \tau)\simeq \Pi^{\rm N}(C, \tau) \simeq \Pi^{\rm uni}(C, \tau) \simeq C\times_{\mathbb{R}} C$.

\begin{remark} \label{R2}
\rm Applying a similar computation for $\mathbb{P}^1_{\mathbb{R}}$, with a rational point $x: \mathrm{Spec} \, \mathbb{R} \rightarrow \mathbb{P}^1_{\mathbb{R}}$ and a fiber functor \(x^*\), yields $\Pi^{\mathrm{S}}(\mathbb{P}^1_{\mathbb{R}}, x^*) \simeq \mathrm{Spec} \, \mathbb{R} \times_{\mathbb{R}} \mathrm{Spec} \, \mathbb{R} \simeq \mathrm{Spec} \, \mathbb{R} $. Thus, we obtain the trivial group scheme over $\mathbb{R}$. More generally, if $X$ is $k$-scheme with a $k$-rational point $x$ such that the category $\mathcal{C}^{\rm nf}(X)$ contains only trivial bundles, then $\Pi^{\rm S}(X,x^*)\simeq \mathrm{Spec}\,k$.
\end{remark}

\begin{remark} 
\rm Suppose $X$ is a $k$-scheme such that the category $\mathcal{C}^{\rm nf}(X)$ contains only trivial bundles. If $X$ does not have any $k$-point, then $\Pi^{\mathrm{S}}(X, \overline{x}^*) \simeq \mathrm{Spec} \, \overline{k} \times_{k} \mathrm{Spec} \, \overline{k}$, where $\overline{x}: \mathcal{C}^{\rm nf}(X) \rightarrow \mathrm{Spec} \, \overline{k}$ is a geometric point. For example, $X$ is a nondegenerate conic \cite{BN09} or Brauer-Severi varieties \cite{SN12, SN24}. 
\end{remark}

\subsection{Fundamental groupoid scheme of a Klein bottle} A \textit{Klein bottle} $X$ is a geometrically connected smooth projective curve of genus one defined over $\mathbb{R}$, and it does not have any real point. In other words, it is a smooth elliptic curve $X_{\mathbb{C}}$ over $\mathbb{C}$ with an anti-holomorphic involution that does not have any fixed point.

Let $L$ be a torsion (or finite) line bundle on $X$. Therefore, there exists a smallest positive integer $n\in \mathbb{N}$ such that $L^{\otimes n}\simeq \mathcal{O}_X$ and for any $1\leq m \leq n-1$, we have $L^{\otimes m}\ncong \mathcal{O}_X$. Let $\mathcal{C}_L$ be the smallest $\mathbb{R}$-linear abelian rigid tensor full subcategory of $\mathbf{Coh}(X)$ containing a line bundle $L$. By following the construction \cite[Page no. 83]{No}, we have 
$$\mathrm{Ob}(\mathcal{C}_L)=\left\{\bigoplus_{i=1}^{i=k}V_i: V_i \in \{\mathcal{O}_X,L,L^{\otimes 2}, \ldots ,L^{\otimes n-1}\} \,\mathrm{and}\, k\in \mathbb{N}\right\}.$$

Consider the fiber functor
$$\omega:\mathcal{C}_L \longrightarrow \mathbf{Coh}(X_{\mathbb{C}}) \longrightarrow \mathbf{Vec}_{\mathbb{C}}$$
corresponding to the identity element $e:\mathrm{Spec}\,\mathbb{C}\rightarrow X_{\mathbb{C}}$ of $X_\mathbb{C}$. The pair $(\mathcal{C}_L,\omega)$ forms Tannakian category over $\mathbb{R}$. By Theorem \ref{Tannaka}, we have a groupoid scheme $(t,s):\Pi \rightarrow \mathrm{Spec}\,\mathbb{C} \times_{\mathbb{R}} \mathrm{Spec}\,\mathbb{C}$ acting transitively on $\mathrm{Spec}\,\mathbb{C} \times_{\mathbb{R}} \mathrm{Spec}\,\mathbb{C}$. Let $\Pi_t$ denote the groupoid scheme $\Pi$ with the base scheme $\mathrm{Spec}\,\mathbb{C}$ by considering $pr_1 \circ (t,s):\Pi \rightarrow \mathrm{Spec}\,\mathbb{C}$. We claim that $\Pi_t (\mathrm{Spec}\,\mathbb{C})= \frac{\mathbb{Z}}{n\mathbb{Z}} \times \mathrm{Gal}(\mathbb{C}|\mathbb{R})$. Consider the diagram 
\begin{equation} \label{Klein}
\xymatrix{    
		\mathrm{Spec}\,\mathbb{C} \ar[d]_{\mathrm{id}} \ar[r]^{a} & \Pi \ar[d]^{(t,s)} \\
        \mathrm{Spec}\,\mathbb{C} & \mathrm{Spec}\,\mathbb{C} \times_{\mathbb{R}} \mathrm{Spec}\,\mathbb{C} \ar[l]_{pr_1\,\,\,\,\,\,\,\,\,\,\,\,\,\,\,\,\,} \\
         } 
\end{equation}

Note that $\Pi_t (\mathrm{Spec}\,\mathbb{C}) =\{a: \text{the diagram (\ref{Klein}) commutes}\}$. We have only two morphisms, $(\mathrm{id},\mathrm{id}):\mathrm{Spec}\,\mathbb{C} \rightarrow \mathrm{Spec}\,\mathbb{C} \times_{\mathbb{R}} \mathrm{Spec}\,\mathbb{C}$ and $(\mathrm{id},\overline{\mathrm{id}}):\mathrm{Spec}\,\mathbb{C} \rightarrow \mathrm{Spec}\,\mathbb{C} \times_{\mathbb{R}} \mathrm{Spec}\,\mathbb{C}$ such that the diagram 
\begin{equation}
\xymatrix{    
		\mathrm{Spec}\,\mathbb{C} \ar[dr]_{\mathrm{id}} \ar[r] & \mathrm{Spec}\,\mathbb{C} \times_{\mathbb{R}} \mathrm{Spec}\,\mathbb{C} \ar[d]^{pr_1} \\
        & \mathrm{Spec}\,\mathbb{C} \\
         } 
\end{equation}
commutes; where $\overline{\mathrm{id}}$ denote the morphism corresponding to the map $\mathbb{C}\rightarrow \mathbb{C}$ defined by the complex conjugate. Thus, we partition the set $\Pi_t (\mathrm{Spec}\,\mathbb{C})$ as the following disjoint union: 
$$\Pi_t (\mathrm{Spec}\,\mathbb{C}) = \{a: (t,s) \circ a = (\mathrm{id},\mathrm{id}) \} \cup \{a: (t,s) \circ a = (\mathrm{id},\overline{\mathrm{id}}) \}.$$

By definition, $\Pi$ is a functor 
$$\Pi: (\mathbf{Sch}|_{\mathrm{Spec}\,\mathbb{C} \times_{\mathbb{R}} \mathrm{Spec}\,\mathbb{C}}) \rightarrow \mathbf{Sets}$$
given by $(h:T\rightarrow \mathrm{Spec}\,\mathbb{C} \times_{\mathbb{R}} \mathrm{Spec}\,\mathbb{C}) \mapsto \mathbf{Isom}^{\otimes}_{T}(h^*pr_2^*\omega,h^*pr_1^*\omega)$; where $(\mathbf{Sch}|_{\mathrm{Spec}\mathbb{C} \times_{\mathbb{R}} \mathrm{Spec}\mathbb{C}})$ denote the category of schemes over $\mathrm{Spec}\,\mathbb{C} \times_{\mathbb{R}} \mathrm{Spec}\,\mathbb{C}$ and $\mathbf{Sets}$ denote the category of sets.

Therefore, we have 
$$\{a: (t,s) \circ a = (\mathrm{id},\mathrm{id}) \}=\Pi(\mathrm{id},\mathrm{id})= \frac{\mathbb{Z}}{n\mathbb{Z}}$$ and  
$$\{a: (t,s) \circ a = (\mathrm{id},\overline{\mathrm{id}}) \}=\Pi(\mathrm{id},\overline{\mathrm{id}})= \frac{\mathbb{Z}}{n\mathbb{Z}}.$$

 Hence, $\Pi_t (\mathrm{Spec}\,\mathbb{C})= \frac{\mathbb{Z}}{n\mathbb{Z}} \times \mathrm{Gal}(\mathbb{C}|\mathbb{R})$. Similarly, let $\mathcal{C}_{tor}$ be the smallest Tannakian category containing all torsion line bundles on $X$ having a fiber functor same as $\omega:\mathcal{C}_{tor} \rightarrow \mathbf{Vec}_{\mathbb{C}}$, then we have $\Pi_t (\mathrm{Spec}\,\mathbb{C})= \frac{\mathbb{Q}}{\mathbb{Z}} \times \mathrm{Gal}(\mathbb{C}|\mathbb{R})$ of the corresponding groupoid scheme $\Pi$.

\subsection{Fundamental groupoid scheme of an abelian variety}
Throughout this section, let $A$ be an abelian variety over a field $k$ of characteristic zero. Recall that, $\mathcal{C}^{\rm uni}(A),\mathcal{C}^{\rm N}(A), \mathcal{C}^{\rm EN}(A)$ are $k$-linear abelian rigid tensor category. To apply Tannaka duality, consider the fiber functor defined by inclusion in $\mathbf{QCoh}(A)$. 
$$i_{\rm uni},i_{\rm N}, i_{\rm EN}: \mathcal{C}^{\rm uni}(A),\mathcal{C}^{\rm N}(A), \mathcal{C}^{\rm EN}(A) \longrightarrow \mathbf{QCoh}(A)$$

The corresponding groupoid $k$-scheme denoted by $\Pi^{\rm uni}(A), \Pi^{\rm N}(A), \Pi^{\rm EN}(A)$ which acts transitively on $A$. By definition, we have
$$\Pi^{\rm uni}(A)= \mathbf{Aut}_k^{\otimes}(i_{\rm uni})= \mathbf{Isom}_{A\times_k A}^{\otimes}(pr_2^*i_{\rm uni},pr_1^*i_{\rm uni})$$
$$\Pi^{\rm N}(A)= \mathbf{Aut}_k^{\otimes}(i_{\rm N})= \mathbf{Isom}_{A\times_k A}^{\otimes}(pr_2^*i_{\rm N},pr_1^*i_{\rm N})$$
$$\Pi^{\rm EN}(A)= \mathbf{Aut}_k^{\otimes}(i_{\rm EN})= \mathbf{Isom}_{A\times_k A}^{\otimes}(pr_2^*i_{\rm EN},pr_1^*i_{\rm EN})$$
where $pr_1,pr_2: A\times_k A \longrightarrow A$ denote the first and second projection maps, respectively. We establish a relation among them.

\begin{theorem}\label{homo-nf-decomposition}
Let $A$ be an abelian variety defined over an algebraically closed field $k$ of characteristic zero. Let $E$ be a vector bundle on $A$. Then, the following are equivalent:
\begin{enumerate}
\item $E$ is homogeneous. That means, it is invariant $(\tau_x^*E \simeq E )$ under all translation maps $\tau_x: A\rightarrow A$ defined by $a \mapsto a+x$.
\item $E$ is an iterated extension of line bundles of degree zero.
\item $E\simeq \bigoplus_i (U_i\otimes L_i)$, where $U_i$'s are unipotent bundles and $L_i$'s are degree zero line bundles.
\item $E$ is numerically flat.
\end{enumerate}
\end{theorem}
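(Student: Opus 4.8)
The plan is to prove the implications $(1)\Rightarrow(3)$, $(3)\Rightarrow(2)$, $(2)\Rightarrow(4)$, $(4)\Rightarrow(3)$ and $(3)\Rightarrow(1)$, which together yield all four equivalences. Two of these are essentially formal. For $(3)\Rightarrow(2)$, each unipotent $U_i$ is by definition an iterated extension of $\mathcal{O}_A$, so $U_i\otimes L_i$ is an iterated extension of $L_i$, and a finite direct sum of iterated extensions of degree-zero line bundles is again one; thus $E$ satisfies $(2)$. For $(2)\Rightarrow(4)$, a degree-zero line bundle $L\in\mathrm{Pic}^0(A)$ is numerically trivial, so both $L$ and $L^{-1}$ are nef and hence $L$ is numerically flat; since the numerically flat bundles form a full abelian subcategory of $\mathbf{Coh}(A)$ closed under extensions (Langer), any iterated extension of degree-zero line bundles is again numerically flat.

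The direction $(3)\Rightarrow(1)$ is also elementary: a degree-zero line bundle is translation invariant (the standard characterization of $\mathrm{Pic}^0(A)$ via the theorem of the square), and a unipotent bundle is translation invariant because $\tau_x^*$ fixes $\mathcal{O}_A$ and acts trivially on $\mathrm{H}^1(A,\mathcal{O}_A)$ (the translation action factors through the connected scheme $A$, hence is constant), so it preserves the extension data defining $U_i$. Consequently each $U_i\otimes L_i$, and therefore $E$, is homogeneous. For the converse $(1)\Rightarrow(3)$ I would invoke the classical structure theorem of Matsushima and Mukai, according to which every homogeneous bundle on an abelian variety decomposes as $\bigoplus_i U_i\otimes L_i$; this is the one step I would cite rather than reprove.

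The heart of the argument is $(4)\Rightarrow(3)$. Let $E$ be numerically flat. Then $E$ is $\mu$-semistable with $c_1=0$ and $c_2=0$, and by Langer's theory it is an object of finite length in the abelian category of numerically flat bundles; fix a Jordan--Hölder filtration. I claim each stable factor is a degree-zero line bundle. Indeed, a \emph{stable} numerically flat bundle has $c_1=0$ and $c_2=0$, so over $\mathbb{C}$ the Kobayashi--Hitchin correspondence equips it with a flat unitary connection, i.e.\ it corresponds to an irreducible unitary representation of $\pi_1(A)$; since $\pi_1(A)$ is abelian this representation is one-dimensional, whence the factor has rank one and lies in $\mathrm{Pic}^0(A)$. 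For an arbitrary algebraically closed field of characteristic zero I would descend the data to a finitely generated subfield and pass to $\mathbb{C}$ by the Lefschetz principle.

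It remains to assemble the factors. For distinct $L,L'\in\mathrm{Pic}^0(A)$ the line bundle $L'\otimes L^{-1}$ is a nontrivial degree-zero line bundle, hence has vanishing cohomology in every degree; therefore $\mathrm{Hom}(L,L')=\mathrm{Ext}^1(L,L')=0$. A standard induction on length then splits $E$ as a direct sum $\bigoplus_j E_j$ of isotypic blocks, where all Jordan--Hölder factors of $E_j$ are isomorphic to a single $L_j\in\mathrm{Pic}^0(A)$. Twisting, $U_j:=E_j\otimes L_j^{-1}$ is an iterated self-extension of $\mathcal{O}_A$, i.e.\ unipotent, so $E_j\simeq U_j\otimes L_j$ and $E\simeq\bigoplus_j U_j\otimes L_j$, which is $(3)$. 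The main obstacle is precisely the claim that the simple numerically flat bundles are exactly the degree-zero line bundles: it rests on the Kobayashi--Hitchin correspondence combined with the abelianness of $\pi_1(A)$, and on the reduction from a general algebraically closed field of characteristic zero to $\mathbb{C}$.
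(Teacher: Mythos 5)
Your proof is correct, but for the one genuinely nontrivial implication it takes a different and much more self-contained route than the paper. The paper disposes of the entire theorem by citation: $(1)\Leftrightarrow(2)\Leftrightarrow(3)$ is attributed to Mukai \cite[Theorem 4.17]{Muk78} and Miyanishi \cite{Miy73}, $(3)\Rightarrow(4)$ to the observation that unipotent bundles and degree-zero line bundles are numerically flat, and $(4)\Rightarrow(2)$ to Langer \cite[Corollary 5.5, Remark 5.6]{La12}. You instead reprove $(4)\Rightarrow(3)$ from scratch: a Jordan--H\"older filtration in Langer's abelian category of numerically flat bundles, identification of the simple objects with elements of $\mathrm{Pic}^0(A)$ via the Kobayashi--Hitchin correspondence together with the abelianness of $\pi_1(A)$, the vanishing of $\mathrm{Hom}$ and $\mathrm{Ext}^1$ between distinct degree-zero line bundles, and the isotypic regrouping that produces the blocks $U_i\otimes L_i$. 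This is in effect a proof of the special case of Langer's cited result; what it buys is transparency (one sees \emph{why} the simple numerically flat objects are exactly $\mathrm{Pic}^0(A)$), and what it costs is the analytic input and the reduction to $\mathbb{C}$ by the Lefschetz principle --- harmless here since characteristic zero is assumed, whereas Langer's argument is algebraic and characteristic-free. Your elementary derivations of $(3)\Rightarrow(2)$, $(2)\Rightarrow(4)$ and $(3)\Rightarrow(1)$ likewise replace part of the appeal to Mukai--Miyanishi, leaving only $(1)\Rightarrow(3)$ as a black box. The one step worth a further sentence is the translation-invariance of unipotent bundles in $(3)\Rightarrow(1)$: the induction needs triviality of the translation action not only on $\mathrm{H}^1(A,\mathcal{O}_A)$ but on $\mathrm{H}^1(A,U')$ for the subbundle $U'$ at each stage (again a consequence of the connectedness of $A$), and one must check that the isomorphism $\tau_x^*U'\simeq U'$ can be chosen compatibly with the extension classes.
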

\begin{proof}
(1) $\Longleftrightarrow$ (2) $\Longleftrightarrow$ (3) follows from \cite[Theorem 4.17]{Muk78} and \cite{Miy73}. Since unipotent bundles and degree zero line bundles are numerically flat, we have (3) $\Longrightarrow$ (4). The implication (4) $\Longrightarrow$ (2) follows from \cite[Corollary 5.5, Remark 5.6]{La12}. 
\end{proof}

\begin{remark}\label{semi-finite decomposition}
\rm By Theorem \ref{homo-nf-decomposition}, if $E$ is a semi-finite bundle then $E\simeq \bigoplus_i (U_i\otimes L_i)$, where $U_i$'s are unipotent bundles and $L_i$'s are torsion line bundles. Moreover, if $E$ is a finite bundle, then $E \simeq \bigoplus_i L_i$, where $L_i$'s are torsion line bundles.
\end{remark}

\begin{theorem}
Let $A$ be an abelian variety defined over an algebraically closed field $k$ of characteristic zero. Then, we have
$$\Pi^{\rm EN}(A) \simeq \Pi^{\rm uni}(A) \times_k \Pi^{\rm N}(A).$$
\end{theorem}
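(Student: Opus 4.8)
The plan is to reduce the assertion to an equivalence of Tannakian categories and then read it off via Tannaka duality. Since the three fiber functors $i_{\rm uni}, i_{\rm N}, i_{\rm EN}$ all take values in $\mathbf{QCoh}(A)$, the three groupoid schemes all act on the base $S = A$, and Theorem \ref{Tannaka} gives $\mathcal{C}^{\star}(A) \simeq \mathbf{Rep}(A : \Pi^{\star}(A))$ for $\star \in \{\mathrm{uni}, \mathrm{N}, \mathrm{EN}\}$. By Remark \ref{Rep(G_1*G_2)= Rep(G_1)*Rep(G_2)} applied with $G_1 = \Pi^{\rm uni}(A)$, $G_2 = \Pi^{\rm N}(A)$ and $S = A$, one has $\mathbf{Rep}(A : \Pi^{\rm uni}(A) \times_k \Pi^{\rm N}(A)) \simeq \mathcal{C}^{\rm uni}(A) \otimes_D \mathcal{C}^{\rm N}(A)$. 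So it suffices to produce a tensor equivalence $\mathcal{C}^{\rm uni}(A) \otimes_D \mathcal{C}^{\rm N}(A) \simeq \mathcal{C}^{\rm EN}(A)$ that is compatible with the forgetful/inclusion fiber functors; Theorem \ref{Tannaka}(3) then converts this into the desired isomorphism of groupoid schemes.

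To build the equivalence I would start from the functor $\Phi: \mathcal{C}^{\rm uni}(A) \times \mathcal{C}^{\rm N}(A) \to \mathcal{C}^{\rm EN}(A)$, $(U, L) \mapsto U \otimes L$. It is $k$-bilinear and exact in each variable (tensoring by a locally free sheaf is exact), and it lands in $\mathcal{C}^{\rm EN}(A)$ since a tensor product of a unipotent bundle with a torsion line bundle is semi-finite. By the universal property of the Deligne tensor product it factors through an exact $k$-linear functor $\tilde{\Phi}: \mathcal{C}^{\rm uni}(A) \otimes_D \mathcal{C}^{\rm N}(A) \to \mathcal{C}^{\rm EN}(A)$. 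Because $(U_1 \otimes L_1) \otimes (U_2 \otimes L_2) \cong (U_1 \otimes U_2) \otimes (L_1 \otimes L_2)$ in $\mathbf{QCoh}(A)$, the functor $\tilde{\Phi}$ is a tensor functor, and since $i_{\rm EN}(U \otimes L) = U \otimes L = i_{\rm uni}(U) \otimes i_{\rm N}(L)$ it intertwines the fiber functors.

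It then remains to show $\tilde{\Phi}$ is an equivalence. Essential surjectivity is immediate from Remark \ref{semi-finite decomposition}: every semi-finite bundle is isomorphic to a finite direct sum $\bigoplus_i U_i \otimes L_i = \tilde{\Phi}\!\left(\bigoplus_i U_i \boxtimes L_i\right)$. For full faithfulness I would use that $\mathcal{C}^{\rm N}(A)$ is semisimple, its simple objects being the torsion line bundles (each with $\mathrm{End} = H^0(A,\mathcal{O}_A) = k$ and pairwise no nonzero maps since $k=\bar{k}$); consequently every object of $\mathcal{C}^{\rm uni}(A) \otimes_D \mathcal{C}^{\rm N}(A)$ is a finite direct sum of pure tensors $U \boxtimes L$ and all Hom-spaces split accordingly. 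Thus it is enough to check, for unipotent $U_i, U_j$ and torsion line bundles $L_i, L_j$, that
\[
\mathrm{Hom}(U_i \otimes L_i,\, U_j \otimes L_j) \;\cong\; \mathrm{Hom}(U_i, U_j) \otimes_k \mathrm{Hom}(L_i, L_j).
\]
Writing the left side as $H^0(A,\, W \otimes M)$ with $W := U_i^{\vee} \otimes U_j$ (unipotent) and $M := L_i^{-1} \otimes L_j$ (a torsion, hence degree zero, line bundle), I would split into two cases. If $L_i \not\cong L_j$ then $M$ is a nontrivial degree zero line bundle, so $H^0(A, M) = 0$; as $W \otimes M$ is an iterated extension of $M$, a dévissage in cohomology gives $H^0(A,\, W \otimes M) = 0$, matching $\mathrm{Hom}(L_i, L_j) = 0$. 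If $L_i \cong L_j$ then tensoring by the line bundle $L_i$ is a self-equivalence of $\mathbf{QCoh}(A)$, so the left side is $\mathrm{Hom}(U_i, U_j) = \mathrm{Hom}(U_i, U_j) \otimes_k k$, matching $\mathrm{Hom}(L_i, L_j) = k$. These identifications are natural, so $\tilde{\Phi}$ is fully faithful, hence an equivalence, and the theorem follows.

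The main obstacle is precisely this full faithfulness, whose heart is the cohomological vanishing $H^0(A,\, W \otimes M) = 0$ for $W$ unipotent and $M$ a nontrivial degree zero line bundle on $A$. The structure theorem (Remark \ref{semi-finite decomposition}) and the semisimplicity of $\mathcal{C}^{\rm N}(A)$ are what make both essential surjectivity and the reduction to this vanishing clean; without the semisimplicity of one factor one would have to confront the usual difficulty of upgrading full faithfulness on generating objects of the Deligne tensor product to all objects.
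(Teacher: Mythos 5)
Your proposal follows essentially the same route as the paper: reduce via Remark \ref{Rep(G_1*G_2)= Rep(G_1)*Rep(G_2)} to an equivalence $\mathcal{C}^{\rm uni}(A)\otimes_D\mathcal{C}^{\rm N}(A)\simeq\mathcal{C}^{\rm EN}(A)$, obtain the comparison functor from the universal property, get essential surjectivity from Remark \ref{semi-finite decomposition}, and get full faithfulness from the K\"unneth-type isomorphism $\mathrm{Hom}(U_1\otimes L_1,U_2\otimes L_2)\simeq\mathrm{Hom}(U_1,U_2)\otimes_k\mathrm{Hom}(L_1,L_2)$. The only difference is that you actually justify this last isomorphism (via semisimplicity of $\mathcal{C}^{\rm N}(A)$ and the vanishing $H^0(A,W\otimes M)=0$ for $W$ unipotent and $M$ a nontrivial degree-zero line bundle), whereas the paper asserts it without proof.
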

\begin{proof}
By Remark \ref{Rep(G_1*G_2)= Rep(G_1)*Rep(G_2)}, it is enough to prove that the category $\mathcal{C}^{\rm EN}(A)$ is equivalent to the category $\mathcal{C}^{\rm uni}(A)\otimes_D \mathcal{C}^{\rm N}(A)$. Consider the tensor product
$$\otimes_D: \mathcal{C}^{\rm uni}(A) \times \mathcal{C}^{\rm N}(A) \longrightarrow \mathcal{C}^{\rm uni}(A) \otimes_D \mathcal{C}^{\rm N}(A)$$
of the categories $\mathcal{C}^{\rm uni}(A)$ and $\mathcal{C}^{\rm N}(A)$. Consider the functor 
$$T:\mathcal{C}^{\rm uni}(A) \times \mathcal{C}^{\rm N}(A) \longrightarrow \mathcal{C}^{\rm EN}(A) $$
defined by $(U,F)\mapsto U\otimes F$ for $U\in \mathrm{Ob}(\mathcal{C}^{\rm uni}(A))$ and $F\in \mathrm{Ob}(\mathcal{C}^{\rm N}(A))$. Note that the functor $F$ is $k$-bilinear and exact-in-each-variable. By universal property of tensor product of categories, there exists a $k$-linear and exact functor 
$$T': \mathcal{C}^{\rm uni}(A) \otimes_D \mathcal{C}^{\rm N}(A) \longrightarrow \mathcal{C}^{\rm EN}(A)$$
such that the diagram 
\[
\xymatrix{
\mathcal{C}^{\rm uni}(A) \times \mathcal{C}^{\rm N}(A) \ar[d]_{T}  \ar[rr]^{\otimes_D}  & &  \mathcal{C}^{\rm uni}(A)\otimes_D \mathcal{C}^{\rm N}(A) \ar[dll]^{T'} \\
\mathcal{C}^{\rm EN}(A)  & &  \\
}
\]
commutes. By Remark \ref{semi-finite decomposition}, the functor $T'$ is essentially surjective. Since 
\begin{align*}
T'\Big(\mathrm{Hom}\big(\otimes_D(U_1, F_1), \otimes_D(U_2,F_2)\big)\Big) & = \mathrm{Hom}(U_1 \otimes F_1, U_2 \otimes F_2) \\
	& \simeq \mathrm{Hom}(U_1,U_2) \otimes_k \mathrm{Hom}(F_1,F_2)
\end{align*}
for all $U_1,U_2 \in \mathrm{Ob}(\mathcal{C}^{\rm uni}(A))$ and $F_1,F_2 \in \mathrm{Ob}(\mathcal{C}^{\rm N}(A))$, the functor $T'$ is full and faithful. The assertion follows as the fiber functor commutes with the functor $T'$.
\end{proof}

\begin{remark}  
\rm Let \( 0: \mathrm{Spec}\,k \to A \) denote the identity element of an abelian variety \( A \). Consider the fiber functor given by pullback:  
\[
0^*: \mathcal{C}^{\star}(A) \longrightarrow \mathbf{Vec}_k, \quad \text{for } \star = \mathrm{EN, uni, N}.
\]  
The functor \( 0^* \) is a neutral fiber functor, and we denote the corresponding affine group schemes by \( \pi^{\star}(A, 0) \). By Remark \ref{R1}, we obtain the following isomorphism:  
\[
\pi^{\rm EN}(A, 0) \simeq \pi^{\rm uni}(A, 0) \times_k \pi^{\rm N}(A, 0).
\]  
This provides an alternative proof of \cite[Theorem 3.3]{AA}.  
\end{remark}


\section{Representations}
The notion of closed immersion and faithfully flat morphism between affine group schemes are completely characterized by their representations (see \cite[Proposition 2.21]{DM}). In this section, we will extend this to an affine groupoid case. Consequently, we will describe the relation among various fundamental groupoid schemes by considering their representations. 

Let $k$ be a field and $\overline{k}$ denote its algebraic closure. Let $S=\mathrm{Spec\,}\overline{k}$ be an affine scheme over a field $k$. Let $(G_1,s_1,t_1,m_1, e_1, inv_1)$ and $(G_2,s_2,t_2,m_2, e_2, inv_2)$ are two affine $k$-groupoid schemes acting transitively on a $k$-scheme $S$. Therefore, we have the following morphisms for $i=1,2$:
\begin{itemize}
	\item a faithfully flat affine morphism $(t_i, s_i):G_i \longrightarrow S\times_k S$
	\item a morphism of $S\times_k S$-scheme $m_i:G_i \times_{^sS^t} G_i \longrightarrow G_i $, $e_i: S \longrightarrow G_i$ and $inv_i:G_{i}\longrightarrow G_{i}$.
\end{itemize}

Let $G_1=\mathrm{Spec\,}L_1$ and $G_2=\mathrm{Spec\,}L_2$; where $L_1$ and $L_2$ are faithfully flat $\overline{k}\otimes_k \overline{k}$-algebra as $(t_1,s_1)$ and $(t_2,s_2)$ are faithfully flat affine morphisms. A morphism between two groupoid schemes is defined in an obvious way.

\begin{definition}
\rm	A \emph{morphism $f:(G_1,s_1,t_1,m_1,e_1, inv_1) \rightarrow (G_2,s_2,t_2,m_2,e_2, inv_2)$ between $k$-groupoid schemes} acting on a $k$-scheme $S$ is a morphism of $k$-scheme $f:G_1\rightarrow G_2$ satisfying $s_2\circ f=s_1, t_2\circ f=t_1, f\circ m_1 = m_2 \circ (f,f), e_2=f\circ e_1$ and $f \circ inv_1 = inv_2 \circ f$. 
\end{definition}

Let $f:G_1 \rightarrow G_2$ be a morphism between $k$-groupoid schemes acting on a $k$-scheme $S$. Then,
it induces the morphism between the corresponding diagonal group schemes $f^{\Delta}=f\times \mathrm{id}_S :G_1^{\Delta}\rightarrow G_2^{\Delta}$ by the base change $\Delta:S\rightarrow S\times_k S$. Note that, $f$ is an isomorphism if and only if $f^{\Delta}$ is an isomorphism (see \cite[3.5.2]{PD}). 

\begin{definition}
\rm	A morphism $f:G_1\rightarrow G_2$ of groupoid schemes is called \emph{faithfully flat} (resp. \emph{closed immersion}) if it is faithfully flat (resp. closed immersion) as a $k$-scheme morphism.
\end{definition}

\begin{lemma} \label{Lff}
	A morphism $f:G_1 \rightarrow G_2$ is faithfully flat if and only if $f^{\Delta}:G_1^{\Delta}\rightarrow G_2^{\Delta}$ is faithfully flat.
\end{lemma}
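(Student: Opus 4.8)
The plan is to treat the two implications asymmetrically: the forward direction is formal, while the converse genuinely uses the groupoid (not merely scheme) structure. Recall that the square defining $G_i^{\Delta}$ is Cartesian, so $f^{\Delta}$ is precisely the base change of $f$ along $\Delta : S \to S\times_k S$. Hence if $f$ is faithfully flat, then $f^{\Delta}$ is faithfully flat because faithful flatness is stable under arbitrary base change; this settles the ``only if'' direction at once.

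For the converse I would pass through the isotropy group schemes together with a torsor trivialization. First note that each $G_i^{\Delta}$ is the isotropy group scheme of the groupoid, and that the division (shear) morphism $G_i \times_{s_i, S} G_i^{\Delta} \to G_i \times_{(t_i,s_i),\, S\times_k S,\, (t_i,s_i)} G_i$, $(g,h)\mapsto (g,\, gh)$, is an isomorphism of schemes; this is the scheme-theoretic form of the assertion that the fibres of $(t_i,s_i)$ are torsors under $G_i^{\Delta}$, and it follows from the groupoid axioms (inverse and associativity) by comparison of $T$-points. Equivalently, the faithfully flat map $(t_i,s_i): G_i \to S\times_k S$ becomes a trivial $G_i^{\Delta}$-torsor after pulling back along itself. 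I would then base-change the $S\times_k S$-morphism $f$ along the faithfully flat $(t_1,s_1): G_1 \to S\times_k S$, obtaining $f_{G_1}: G_1\times_{S\times_k S} G_1 \to G_2\times_{S\times_k S} G_1$ over $G_1$. Its source is a trivial $G_1^{\Delta}$-torsor over $G_1$ via the tautological diagonal section, while its target is a trivial $G_2^{\Delta}$-torsor over $G_1$ via the section determined by $f$ itself (indeed $f$, being an $S\times_k S$-morphism, is a section of the projection $G_2\times_{S\times_k S} G_1 \to G_1$).

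Under these two trivializations, the compatibility of $f$ with multiplication and inverse — encapsulated in the identity $f(g)^{-1} f(g') = f^{\Delta}(g^{-1} g')$ for $g,g'$ with the same image under $(t_1,s_1)$ — identifies $f_{G_1}$ with the pullback $s_1^{*} f^{\Delta}$ of $f^{\Delta}: G_1^{\Delta} \to G_2^{\Delta}$ along $s_1 : G_1 \to S$, that is, with $\mathrm{id}_{G_1} \times_S f^{\Delta}$. Now if $f^{\Delta}$ is faithfully flat, then so is $s_1^{*} f^{\Delta} = f_{G_1}$ by stability under base change; and since $f_{G_1}$ is the base change of $f$ along the faithfully flat $(t_1,s_1)$, fpqc descent of faithful flatness forces $f$ to be faithfully flat. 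The main obstacle is the torsor identification itself: checking that the shear map is an isomorphism of schemes and keeping careful track of the source and target base points so that $f_{G_1}$ really becomes $\mathrm{id}_{G_1}\times_S f^{\Delta}$. Once this is in place, both the preservation (base change) and the reflection (descent) of faithful flatness are routine, and — worth noting — the very same computation applies verbatim to the closed-immersion case of the next lemma, since being a closed immersion is likewise stable under base change and descends along faithfully flat morphisms.
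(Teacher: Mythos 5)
Your argument is correct, but the converse direction takes a genuinely different route from the paper's. The forward implication is the same in both: $f^{\Delta}$ is the base change of $f$ along $\Delta:S\to S\times_k S$, and faithful flatness is stable under base change. For the converse, the paper works with coordinate rings and invokes an isomorphism $L_1\otimes_{\overline{k}\otimes_k\overline{k}}\overline{k}\otimes_k\overline{k}\simeq L_1$, i.e.\ the \emph{global} triviality of $G_1$ as a $G_1^{\Delta}$-torsor over $S\times_k S$ (an assertion tied to $S=\mathrm{Spec}\,\overline{k}$ and stated there without justification), and then factors $f^{\#}$ through $f^{\#}\otimes\mathrm{id}$. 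You avoid any global triviality statement: the shear map $(g,h)\mapsto(g,gh)$, with inverse $(g,g')\mapsto(g,g^{-1}g')$ built from the multiplication and inversion morphisms, shows the torsor becomes trivial after pullback along the fpqc cover $(t_1,s_1)$ itself; under the resulting trivializations of source and target, the identity $f(gh)=f(g)\,f^{\Delta}(h)$ identifies $f_{G_1}$ with $\mathrm{id}_{G_1}\times_S f^{\Delta}$, and faithful flatness then descends along the affine faithfully flat $(t_1,s_1)$. What your approach buys is generality and robustness: it works for an arbitrary base $S$, it uses only the groupoid axioms rather than special features of $\overline{k}\otimes_k\overline{k}$, and, as you observe, the same computation handles Lemma \ref{Lci}, since being a closed immersion is likewise stable under base change and fpqc-local on the base. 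The two points you should write out in full are the verification that the two shear maps are mutually inverse morphisms of schemes and the compatibility $f(gh)=f(g)f^{\Delta}(h)$; both are direct consequences of the groupoid axioms together with $f$ being a morphism of groupoid schemes over $S\times_k S$, so there is no gap, only bookkeeping.
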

\begin{proof}
	Suppose $f:G_1 \rightarrow G_2$ is a faithfully flat morphism. Then, the induced map $f^\#:L_2\rightarrow L_1$ is a faithfully flat algebra map. Hence, $L_1$ is a faithfully flat $L_2$-module. Now, consider the base change under the map $L_2 \rightarrow L_2 \otimes_{\overline{k}\otimes_k \overline{k}} \overline{k}$, then $L_1 \otimes_{L_2} (L_2 \otimes_{\overline{k}\otimes_k \overline{k}} \overline{k})\simeq L_1 \otimes_{\overline{k}\otimes_k \overline{k}} \overline{k}$ is a faithfully flat $L_2 \otimes_{\overline{k}\otimes_k \overline{k}} \overline{k}$-module. Hence, the induced morphism $f^{\Delta}:G_1^{\Delta}\rightarrow G_2^{\Delta}$ is faithfully flat. Conversely, assume that the map $f^\# \otimes \mathrm{id} :L_2 \otimes_{\overline{k}\otimes_k \overline{k}} \overline{k}\rightarrow L_1 \otimes_{\overline{k}\otimes_k \overline{k}} \overline{k}$ is faithfully flat. We have the following commutative diagram:
	\begin{equation} \label{eq- faithfully flat}
		\xymatrix{    
			L_2 \ar[d] \ar[r]^{f^\#} & L_1 \ar[d] \\
			L_2 \otimes_{\overline{k}\otimes_k \overline{k}} \overline{k} \ar[r]^{f^\# \otimes \mathrm{id}} & L_1 \otimes_{\overline{k}\otimes_k \overline{k}} \overline{k} \\
		} 
	\end{equation}
	Consider the natural inclusion $g: L_1 \otimes_{\overline{k}\otimes_k \overline{k}} \overline{k} \rightarrow L_1 \otimes_{\overline{k}\otimes_k \overline{k}} \overline{k} \otimes_k \overline{k} \simeq L_1$, which is faithfully flat. Since this map makes the above diagram commutative, it follows that $f^\#:L_2\rightarrow L_1$ is faithfully flat as it is a composition of faithfully flat maps.
\end{proof}

\begin{lemma} \label{Lci}
	A morphism $f:G_1 \rightarrow G_2$ is closed immersion if and only if $f^{\Delta}:G_1^{\Delta}\rightarrow G_2^{\Delta}$ is closed immersion.
\end{lemma}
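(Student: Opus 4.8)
The plan is to mirror the structure of the proof of Lemma \ref{Lff}, translating the characterization of ``closed immersion'' into the language of the coordinate algebras $L_1$, $L_2$ and their base changes along the diagonal. Recall that for affine schemes, a morphism $f\colon G_1\to G_2$ is a closed immersion precisely when the induced algebra map $f^\#\colon L_2\to L_1$ is surjective. Likewise, $f^\Delta\colon G_1^\Delta\to G_2^\Delta$ is a closed immersion exactly when the base-changed map $f^\#\otimes\mathrm{id}\colon L_2\otimes_{\overline{k}\otimes_k\overline{k}}\overline{k}\to L_1\otimes_{\overline{k}\otimes_k\overline{k}}\overline{k}$ is surjective. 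So the whole lemma reduces to the purely algebraic assertion that $f^\#$ is surjective if and only if $f^\#\otimes\mathrm{id}$ is surjective, over the rings in play.

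First I would prove the forward direction, which is the easy half: surjectivity is preserved by any base change (tensoring is right exact), so if $f^\#\colon L_2\to L_1$ is surjective then so is $f^\#\otimes\mathrm{id}$, and hence $f^\Delta$ is a closed immersion. For the converse I would exploit the same commutative square \eqref{eq- faithfully flat} used in Lemma \ref{Lff}, together with the faithfully flat inclusion $g\colon L_1\otimes_{\overline{k}\otimes_k\overline{k}}\overline{k}\hookrightarrow L_1$. The key point is that faithfully flat descent detects surjectivity: a map of modules is surjective if and only if it becomes surjective after a faithfully flat base change. Concretely, from the assumption that $f^\#\otimes\mathrm{id}$ is surjective, I would recover surjectivity of $f^\#$ after tensoring up along the faithfully flat map $\overline{k}\otimes_k\overline{k}\to \overline{k}\otimes_k\overline{k}\otimes_k\overline{k}$ (equivalently along $g$), and then descend.

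The more careful point—and what I expect to be the main obstacle—is justifying the descent of surjectivity correctly. Unlike flatness, surjectivity of $f^\#$ itself does not follow formally from surjectivity of a single base change unless that base change is faithfully flat as a map of the \emph{target} rings. Here the relevant faithfully flat map is $g\colon L_1\otimes_{\overline{k}\otimes_k\overline{k}}\overline{k}\to L_1$; tensoring the surjection $f^\#\otimes\mathrm{id}$ up along $g$ gives surjectivity of $f^\#\otimes_{\overline{k}\otimes_k\overline{k}}\overline{k}\otimes_k\overline{k}$, which by the isomorphism $L_1\otimes_{\overline{k}\otimes_k\overline{k}}\overline{k}\otimes_k\overline{k}\simeq L_1$ can be identified with $f^\#$ itself. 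I would therefore argue that the cokernel $C=\mathrm{coker}(f^\#)$ satisfies $C\otimes_{\overline{k}\otimes_k\overline{k}}\overline{k}=0$ by hypothesis, and then show $C=0$ using that $L_1$ is faithfully flat over $\overline{k}\otimes_k\overline{k}$ (a consequence of $(t_1,s_1)$ being faithfully flat), so that the vanishing after base change forces $C=0$.

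Finally I would assemble these observations: surjectivity of $f^\#$ is equivalent to surjectivity of $f^\#\otimes\mathrm{id}$, which by the affine characterization translates back to ``$f$ is a closed immersion if and only if $f^\Delta$ is a closed immersion.'' Throughout, the groupoid-scheme hypotheses enter only through the faithful flatness of $(t_i,s_i)$, which guarantees the $L_i$ are faithfully flat $\overline{k}\otimes_k\overline{k}$-algebras and makes the descent step valid; the rest is a formal manipulation parallel to the proof of Lemma \ref{Lff}.
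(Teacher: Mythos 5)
Your argument is essentially the paper's own: the forward direction is right-exactness of base change, and for the converse you tensor the surjection $f^\#\otimes\mathrm{id}$ further along $-\otimes_k\overline{k}$ (i.e.\ along the faithfully flat map $g$) and identify the result with $f^\#$ via the isomorphism $L_i\otimes_{\overline{k}\otimes_k\overline{k}}\overline{k}\otimes_k\overline{k}\simeq L_i$, exactly as in the proofs of Lemmas \ref{Lff} and \ref{Lci}. One caveat: your closing aside that $C\otimes_{\overline{k}\otimes_k\overline{k}}\overline{k}=0$ forces $C=0$ ``because $L_1$ is faithfully flat over $\overline{k}\otimes_k\overline{k}$'' is not a valid justification (what would be needed is faithful flatness of $\overline{k}\otimes_k\overline{k}\to\overline{k}$, which fails), but this remark is superfluous, since your main route already gives surjectivity of $f^\#$ directly from the surjectivity of its base change along $g$.
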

\begin{proof}
	Since $S$ is an affine scheme, the map  $\Delta:S\rightarrow S\times_k S$ is closed immersion. The projection map $G_i^{\Delta}\rightarrow G_i$ is the base change of $\Delta:S\rightarrow S\times_k S$ by $(t_i,s_i):G_i\rightarrow S\times_k S$, so $G_i^{\Delta}\rightarrow G_i$ is closed immersion. If $f$ is closed immersion, then $f^{\Delta}$ is closed immersion by the diagram \eqref{eq- faithfully flat}. Let $f^{\Delta}$ be closed immersion. It is morphism of affine schemes so $f^\# \otimes \mathrm{id}:L_2 \otimes_{\overline{k}\otimes_k \overline{k}} \overline{k} \rightarrow L_1 \otimes_{\overline{k}\otimes_k \overline{k}} \overline{k}$ is surjective. This implies that the map $f^\#\otimes \mathrm{id} \otimes \mathrm{id}: L_2 \otimes_{\overline{k}\otimes_k \overline{k}} \overline{k} \otimes_k \overline{k} \rightarrow L_1 \otimes_{\overline{k}\otimes_k \overline{k}} \overline{k} \otimes_k \overline{k}$ is surjective and compactible with $f^\#$ by considering standard isomorphism $L_i \otimes_{\overline{k}\otimes_k \overline{k}} \overline{k} \otimes_k \overline{k} \rightarrow L_i$. Hence, $f^\#$ is surjective.
\end{proof}

Let $\omega_f: \mathbf{Rep}(S:G_2)\rightarrow \mathbf{Rep}(S:G_1)$ denote the induced tensor functor on the category of representations defined by $(V,\rho)\mapsto (V, f\circ \rho)$. 

\begin{theorem}
	Let $f:G_1\rightarrow G_2$ be a morphism between two affine $k$-groupoid scheme $G_1$ and $G_2$ acting transitively on a $k$-scheme $S=\mathrm{Spec\,}\overline{k}$. Let $\omega_f: \mathbf{Rep}(S:G_2)\rightarrow \mathbf{Rep}(S:G_1)$ be the induced functor. Then,
	\begin{enumerate}
		\item $f$ is faithfully flat morphism if and only if the functor $\omega_f$ is fully faithful and every subobject of $\omega_f(X)$, for $X \in \mathrm{Ob}(\mathbf{Rep}(S:G_2))$, is isomorphic to image of a subobject of $X$.
		\item $f$ is closed immersion if and only if every object of $\mathbf{Rep}(S:G_1)$ is isomorphic to a subquotient of an object $\omega_f(X)$, for $X\in \mathrm{Ob}(\mathbf{Rep}(S:G_2))$.
	\end{enumerate}
\end{theorem}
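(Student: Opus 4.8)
The plan is to reduce the statement to the classical characterization of faithfully flat morphisms and closed immersions of affine group schemes in terms of their representations, namely \cite[Proposition 2.21]{DM}, by passing to the diagonal group schemes $G_1^\Delta$ and $G_2^\Delta$. The reduction proceeds along two parallel tracks, one geometric and one tannakian, which are then glued by the already-established equivalence between a transitive groupoid and its isotropy group.

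On the geometric side, Lemmas \ref{Lff} and \ref{Lci} do exactly the required work: $f$ is faithfully flat (resp.\ a closed immersion) if and only if $f^\Delta\colon G_1^\Delta \to G_2^\Delta$ is faithfully flat (resp.\ a closed immersion). Since $S=\mathrm{Spec}\,\overline{k}$ is a single point, each $G_i^\Delta$ is an affine group scheme over the field $\overline{k}$, and $f^\Delta$, being the base change of $f$ along the diagonal $\Delta\colon S\to S\times_k S$, is a homomorphism of such group schemes. Thus the hypotheses of \cite[Proposition 2.21]{DM} are met for $f^\Delta$.

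On the tannakian side, I would first note that the forgetful functor $\mathbf{Rep}(S:G_i)\to \mathbf{QCoh}(S)=\mathbf{Vec}_{\overline{k}}$ is a neutral fibre functor and that $\mathrm{End}(\mathbbm{1})=\overline{k}$, so $\mathbf{Rep}(S:G_i)$ is a neutral tannakian category over $\overline{k}$. Restricting the $G_i$-action to the diagonal gives an exact $\overline{k}$-linear tensor functor $\Phi_i\colon \mathbf{Rep}(S:G_i)\to \mathbf{Rep}_{\overline{k}}(G_i^\Delta)$, and by Theorem \ref{Tannaka} applied to this neutral fibre functor the Tannaka dual of $\mathbf{Rep}(S:G_i)$ is precisely $G_i^\Delta$, so $\Phi_i$ is an equivalence of tensor categories. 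The comparison square relating $\omega_f$ and $\omega_{f^\Delta}$ through $\Phi_1$ and $\Phi_2$ commutes up to natural isomorphism, because restricting the action along $f$ and then to the diagonal of $G_1$ coincides with restricting to the diagonal of $G_2$ and then along $f^\Delta$. Finally, since an equivalence of abelian tensor categories preserves full faithfulness of a functor, images, subobjects, and the property of being a subquotient, the two categorical conditions for $\omega_f$ transfer verbatim to $\omega_{f^\Delta}$; feeding these into \cite[Proposition 2.21]{DM} for $f^\Delta$ and combining with Lemmas \ref{Lff} and \ref{Lci} yields both (1) and (2).

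The main obstacle I anticipate is the tannakian identification: showing that the Tannaka dual of the neutral fibre functor on $\mathbf{Rep}(S:G_i)$ is the diagonal group $G_i^\Delta$, naturally in $G_i$, so that the comparison square genuinely commutes. This is where transitivity of the $G_i$-action on $S=\mathrm{Spec}\,\overline{k}$ is essential: because $S$ is a single geometric point, a groupoid representation is rigidly determined by its restriction to the isotropy (diagonal) group, making $\Phi_i$ an equivalence rather than merely a faithful functor. Once the isomorphism $\mathbf{Aut}^\otimes_{\overline{k}}(\text{forgetful})\simeq G_i^\Delta$ is made precise and natural in $f$, the remaining verifications are formal.
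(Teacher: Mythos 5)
Your top-level strategy coincides with the paper's: use Lemmas \ref{Lff} and \ref{Lci} to reduce the geometric statement to the diagonal group schemes, and then invoke \cite[Proposition 2.21]{DM} for the homomorphism $f^{\Delta}:G_1^{\Delta}\to G_2^{\Delta}$. However, the mechanism you propose for transferring the representation-theoretic hypotheses from $\omega_f$ to $\omega_{f^{\Delta}}$ contains a genuine error. You assert that the forgetful functor makes $\mathbf{Rep}(S:G_i)$ a \emph{neutral} Tannakian category over $\overline{k}$ with $\mathrm{End}(\mathbbm{1})=\overline{k}$, and that its Tannaka dual is $G_i^{\Delta}$, so that the restriction functor $\Phi_i:\mathbf{Rep}(S:G_i)\to\mathbf{Rep}_{\overline{k}}(G_i^{\Delta})$ is an equivalence. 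This is false. Since $(t_i,s_i):G_i\to S\times_k S$ is faithfully flat, the equalizer of $s_i^*,t_i^*:\overline{k}\to L_i$ is $k$, so $\mathrm{End}(\mathbbm{1})=k$: the category $\mathbf{Rep}(S:G_i)$ is $k$-linear, not $\overline{k}$-linear, and by Theorem \ref{Tannaka}(3) its Tannaka dual with respect to the forgetful functor is the groupoid $G_i$ itself, not the group $G_i^{\Delta}$. A concrete counterexample appears in this very paper: for $G=\mathrm{Spec}\,\mathbb{C}\times_{\mathbb{R}}\mathrm{Spec}\,\mathbb{C}$ acting on $S=\mathrm{Spec}\,\mathbb{C}$ one has $\mathbf{Rep}(S:G)\simeq\mathbf{Vec}_{\mathbb{R}}$ (this is exactly the computation for the anisotropic conic), whereas $G^{\Delta}$ is the trivial group over $\mathbb{C}$ and $\mathbf{Rep}_{\mathbb{C}}(G^{\Delta})=\mathbf{Vec}_{\mathbb{C}}$; the restriction functor $V\mapsto V\otimes_{\mathbb{R}}\mathbb{C}$ is faithful but not full, hence not an equivalence.

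Because your argument derives every remaining step ("equivalences preserve full faithfulness, subobjects, subquotients, so the conditions transfer verbatim") from this non-existent equivalence, the proof as written does not go through. What is actually needed, and what the paper does (tersely) via the commutative square \eqref{B}, is to transfer the two categorical conditions along the scalar-extension functors $\mathbf{Rep}(S:G_i)\to\mathbf{Rep}_{\overline{k}}(G_i^{\Delta})$ induced by the base change $\Delta:S\to S\times_k S$; these functors are exact and faithful but not equivalences, so one must check directly that the hypotheses on $\omega_f$ imply the corresponding hypotheses on $\omega_{f^{\Delta}}$ (for part (2) the paper instead interposes the intermediate Tannakian subcategory $\mathcal{D}$ of subquotients and the induced factorization $G_1\to G_{\mathcal{D}}\to G_2$ before diagonalizing). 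If you replace your claimed equivalence by such a transfer argument, the rest of your outline is sound; as it stands, the central identification is wrong and the forward directions (for which the paper gives a separate, direct argument using the equivalence of $\mathbf{Rep}(S:G_2)$ with the subcategory of representations of $G_1$ factoring through $f$) are also left resting on it.
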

\begin{proof}
	(1) Let $f:G_1 \rightarrow G_2$ be a faithfully flat morphism. Then, there is an equivalence between the category $\mathbf{Rep}(S:G_2)$ and the full subcategory of $\mathbf{Rep}(S:G_1)$ consisting of representations of $G_1$ that factor through $f$. Hence, $\omega_f$ has the stated property. Conversely, suppose that $\omega_f$ has the stated properties. Note that any representation $(V,\rho)$ of $G_2$ is determined by the pair $(V, s_2^*V\simeq t_2^*V)$; where $V$ is a quasi-coherent $\mathcal{O}_S$-module. The image of $(V,\rho)$ under the functor $\omega_f$ is determined by $(V,f^*s_2^*V\simeq f^*t_2^*V)$ as $t_2\circ f=t_1$ and $s_2\circ f=s_1$. We have the following commutative diagram:
	\begin{equation} \label{B}
		\xymatrix{    
			G_1^{\Delta} \ar[d] \ar[r]^{f^{\Delta}} & G_2^{\Delta} \ar[d] \\
			G_1 \ar[r]^{f} & G_2  \\
		} 
	\end{equation}
	The above diagram gives the same condition on the functor $\omega_{f^{\Delta}}$, which implies that $f^{\Delta}$ is faithfully flat \cite[Proposition 2.21]{DM}. Hence, so is $f$ by Lemma \ref{Lff}.\\
	(2) Let $\mathcal{D}$ be the full subcategory of $\mathbf{Rep}(S:G_1)$ whose objects are subquotients of $\omega_f(X)$ for $X \in \mathrm{Ob}(\mathbf{Rep}(S:G_2))$. Clearly, $(\mathcal{D},\omega_1|_{\mathcal{D}})$ is Tannakian category. The sequence $\mathbf{Rep}(S:G_2)\rightarrow \mathcal{D} \rightarrow \mathbf{Rep}(S:G_1)$ induce a sequence on groupoid schemes $G_1 \rightarrow G_{\mathcal{D}} \rightarrow G_2$; where $G_{\mathcal{D}}$ denote the corresponding groupoid scheme acting transitively on $S$. By taking a diagonal group scheme, we have the same properties for the representation categories of diagonal group schemes. Then, the assertion follows by  \cite[Proposition 2.21]{DM} and Lemma \ref{Lci}.
\end{proof}

Let $X$ be a $k$-scheme of finite type with $\mathrm{H}^0(X,\mathcal{O}_X)=k$. Let $X_{\overline{k}}$ denote the base change and let $x:\mathrm{Spec}\,\overline{k}\rightarrow X_{\overline{k}}$ be a rational point of $X_{\overline{k}}$. Consider the fiber functor
$$\omega: \mathcal{C}^{\rm nf}(X),\mathcal{C}^{\rm EN}(X),\mathcal{C}^{\rm N}(X),\mathcal{C}^{\rm uni}(X)\longrightarrow \mathbf{QCoh}(X_{\overline{k}})\longrightarrow \mathbf{Vec}_{\overline{k}}$$
defined by the pullback along the composition map $\mathrm{Spec}\,\overline{k}\rightarrow X_{\overline{k}} \rightarrow X$ for each of these categories. 

\begin{cor}
	We have the following diagram of groupoid schemes, where all the maps are faithfully flat:
	\[
	\xymatrix{    
		\Pi^{\rm S}(X,\omega) \ar[r] & \Pi^{\rm EN}(X,\omega) \ar[r] \ar[d] & \Pi^{\rm N}(X,\omega) \\
		& \Pi^{\rm uni}(X,\omega) &\\
	} 
	\]
\end{cor}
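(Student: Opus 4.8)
The plan is to realise each of the three arrows as the morphism of groupoid schemes induced, via Tannaka duality, by an inclusion of the corresponding representation categories, and then to feed each such morphism into part (1) of the preceding theorem. Note first that the ambient base scheme here is $S=\mathrm{Spec}\,\overline{k}$, since $\omega$ takes values in $\mathbf{Vec}_{\overline{k}}=\mathbf{QCoh}(\mathrm{Spec}\,\overline{k})$, so that theorem applies verbatim to all the maps in question.

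First I would fix the directions of the arrows. The fiber functor $\omega$ restricts to each of $\mathcal{C}^{\rm N}(X)$, $\mathcal{C}^{\rm uni}(X)$, $\mathcal{C}^{\rm EN}(X)$ compatibly with the inclusions
$$\mathcal{C}^{\rm N}(X),\ \mathcal{C}^{\rm uni}(X)\ \subset\ \mathcal{C}^{\rm EN}(X)\ \subset\ \mathcal{C}^{\rm nf}(X),$$
and each inclusion is a $k$-linear exact tensor functor commuting with $\omega$. Since Tannaka duality (Theorem \ref{Tannaka}) is functorial in the Tannakian category, restricting a tensor automorphism of $\omega|_{\mathcal{C}^{\rm nf}}$ to the subcategory $\mathcal{C}^{\rm EN}(X)$, and likewise for the other two inclusions, yields the morphisms
$$\Pi^{\rm S}(X,\omega)\to\Pi^{\rm EN}(X,\omega)\to\Pi^{\rm N}(X,\omega),\qquad \Pi^{\rm EN}(X,\omega)\to\Pi^{\rm uni}(X,\omega).$$
Under these identifications, the functor $\omega_f$ attached to each $f$ is exactly the inclusion of the smaller category into the larger one; for instance for $f\colon\Pi^{\rm S}\to\Pi^{\rm EN}$ the functor $\omega_f\colon\mathbf{Rep}(S:\Pi^{\rm EN})\to\mathbf{Rep}(S:\Pi^{\rm S})$ is the inclusion $\mathcal{C}^{\rm EN}(X)\hookrightarrow\mathcal{C}^{\rm nf}(X)$.

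Next I would apply part (1) of the preceding theorem to each $f$ in turn. This reduces faithful flatness of $f$ to two properties of $\omega_f$: that it be fully faithful, and that every subobject of $\omega_f(X)$ be isomorphic to the image of a subobject of $X$. Full faithfulness is immediate, because each $\mathcal{C}^{\star}(X)$ is by construction a \emph{full} subcategory of the larger one. Hence the entire statement collapses to the assertion that each smaller category is closed under subobjects inside the larger: a subobject (taken in $\mathcal{C}^{\rm EN}(X)$) of a finite bundle is again finite; a subobject of a unipotent bundle is unipotent; and a numerically flat subobject of a semi-finite bundle is again semi-finite.

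I expect this closure-under-subobjects property to be the main obstacle, as it is the only non-formal input. In the unipotent case it follows by intersecting the subbundle with the defining filtration by copies of $\mathcal{O}_X$ and checking the induced quotients are trivial. For finite (essentially finite) bundles the stability under subobjects is part of Nori's theory \cite{No}, and for semi-finite bundles the corresponding stability is due to Otabe \cite{Ot}; implicit here is Langer's description \cite{La12} identifying numerically flat bundles as the ambient category. In the abelian variety case one could instead read all three closures directly off the explicit decomposition of Theorem \ref{homo-nf-decomposition}. Granting these, each $\omega_f$ satisfies the hypotheses of part (1), so each $f$ is faithfully flat, which is precisely the content of the corollary.
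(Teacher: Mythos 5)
Your proof is correct and follows exactly the route the paper intends: the corollary is stated without an explicit proof immediately after the theorem, and the implicit justification is precisely part (1) applied to the full inclusions $\mathcal{C}^{\rm N}(X),\mathcal{C}^{\rm uni}(X)\subset\mathcal{C}^{\rm EN}(X)\subset\mathcal{C}^{\rm nf}(X)$ over $S=\mathrm{Spec}\,\overline{k}$. You also correctly isolate the one non-formal ingredient --- closure of each subcategory under subobjects taken in the ambient one --- and attribute it appropriately, so there is nothing to add.
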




\end{document}